\newtheorem{Theorem} {Theorem} [section]
\newtheorem{Proposition} [Theorem] {Proposition}
\newtheorem{Lemma} [Theorem] {Lemma}
\newtheorem{Fact} [Theorem] {Fact}
\newcommand{\Ff}{{\mathbb F}}
\newcommand{\cC}{{\mathcal C}}
\newcommand{\Aut}{\mathrm{Aut}}
\newcommand{\<}{\langle}
\renewcommand{\>}{\rangle} 
\renewcommand{\phi}{\varphi} 
\title{The strongly regular twisted $D_{5,5}(q)$ graph}
\author{
Ferdinand Ihringer
}
\date{11 January 2023}
\begin{document}
\maketitle

\begin{abstract}
  We construct a new family of strongly regular graphs 
  with the same parameters as the strongly regular graphs $D_{5,5}(q)$.
  The construction can be seen as a variant of the construction 
  of twisted Grassmann graphs by Van Dam and Koolen.
\end{abstract}

\section{Introduction}

The {\it Grassmann graph} $J_q(n, k)$ is the graph with the
$k$-spaces of an $n$-dimension\-al vector space over $\Ff_q$ as vertices,
two vertices adjacent if they meet in a $(k-1)$-space.
The construction of the \textit{twisted Grassmann graphs}
by Van Dam and Koolen in 2005 was a breakthrough in the study
of distance-regular graphs.
Twisted Grassmann graphs are the only knownn intransitive family
of distance-regular graphs of unbounded
diameter with the same parameters and
spectrum as the Grassmann graphs $J_q(2k+1, k+1)$
and can be constructed
by modifying $J_q(2k+1, k+1)$, cf.\ \cite{vDK2005}.
It is a classical theme in combinatorics to investigate
if a combiantorial object is determined by some
set of invariants associated with it.
Each distance-regular graph is associated with an
so-called \textit{intersection array}, cf. \cite{BCN}.
The Grassmann graphs $J_q(2k+1, k+1)$ and
the corresponding twisted Grassmann graphs
have the same intersection array, showing that
$J_q(2k+1, k+1)$ is not determined by its
intersection array.

A priori the construction by Van Dam and Koolen naturally generalizes
to other distance-transitive graphs obtained from spherical buildings,
but the necessary conditions are very specific
and some serendipity is involved.
Here we generalize their method to
the strongly regular graphs $D_{5,5}(q)$.
We obtain a new strongly
regular graph with the same parameters, see \S\ref{sec:D5}
definitions. Hence, we obtain the following:

\begin{Theorem}\label{thm:main}
 Let $q$ be a prime power.
 The strongly regular graph $D_{5,5}(q)$ is not determined by
 its parameters.
\end{Theorem}

The strongly regular graph $D_{5,5}(q)$ is a rank $3$ strongly
regular graph, that is, its automorphism group has precisely
three orbits on pairs of vertices. This is a rare phenomenon
and indeed all rank $3$ graphs have been classified, see \cite{BvM}.
Hence, the investigation of $D_{5,5}(q)$ and modifications of it
are of particular interest in the study of strongly regular graphs.
We believe that $D_{5,5}(q)$ and $J(2k+1, k+1)$ are, up to isomorphism,
the only distance-transitive graphs from finite spherical buildings
for which our interpretation of the Van Dam-Koolen method applies.

\section{Twisted Grassmann graphs}

Let us specify what we consider as the two natural generalizations
of the Van Dam-Koolen construction to spherical buildings by giving two
descriptions of the twisted Grassmann graphs.
See \cite{BCN,BvM} for any unexplained notation.
We use algebraic dimensions
together with projective terminology:
we call $1$-spaces {\it points}, $2$-spaces {\it lines},
$3$-spaces {\it planes}, and $4$-spaces {\it solids}.

\subsection{Changing vertices}

If $\Gamma$ is the Grassmann graph $J_q(2k+1, k+1)$,
then the {\it twisted Grassmann graph} $\Gamma'$ can be obtained
as follows: Consider a vector space $V$ of dimension $2k+1$
over $\Ff_q$ with a fixed hyperplane $H$.
The vertices of $\Gamma'$ are the $(k-1)$-spaces of $V$
in $H$ and the $(k+1)$-spaces of $V$
not contained in $H$. Two $(k+1)$-spaces are adjacent
if their meet is a $k$-space, two $(k-1)$-spaces are adjacent
if their meet is a $(k-2)$-space, and a $(k-1)$-space
and a $(k+1)$-space are adjacent if they are incident.

\subsection{Changing edges}

Recall {\it Godsil-McKay (GM) switching}, cf.\ \cite{GM1982}.
Let $\Gamma$ be a graph with vertex set $X$,
and let $\{ C_1, \ldots, C_t, D \}$ be a partition of $X$
such that $\{ C_1, \ldots, C_t\}$ is an equitable partition
of the induced subgraph of $\Gamma$ on $X \setminus D$,
and for every $x \in D$, the vertex $x$ has either
$0$, $\frac12 |C_i|$, or $|C_i|$ neighbors in $C_i$.
Construct a new graph $\Gamma'$ by interchanging adjacency
and nonadjacency between $x \in D$ and $C_i$ whenever
$x$ has $\frac12 |C_i|$ neighbors in $C_i$.
Then $\Gamma$ and $\Gamma'$ are cospectral.

This leads to a second way of obtaining $\Gamma'$ due to
Munemasa, see \cite{Munemasa2015}.
Let $\Gamma$ denote the Grassmann graph $J_q(2k+1, k+1)$.
Let $\sigma$ be a \textit{polarity} of $H$, that is, $\sigma$
is an inclusion-reversing bijection on the subspaces of $H$
such that $\sigma^2$ is the identity.
Let $D$ be the $(k+1)$-spaces of $H$,
for a $k$-space $S$ of $H$ let $C_S$ denote all $(k+1)$-spaces
of $V$ not in $D$ incident with $S$
(or both). Then $\{ C_S: S \text{ is a $k$-space of } H \}$ form an equitable partition
of the induced subgraph on $X \setminus D$.
Moreover, $\{ C_S \cup C_{S^\sigma}: S \text{ is a $k$-space of } H \}$ is still an equitable
partition on the induced subgraph on $X \setminus D$
and we obtain a graph $\Gamma'$
cospectral with $\Gamma$ which is isomorphic
to the twisted Grassmann graph.

\section{Twisted rank 5 hyperbolic spaces} \label{sec:D5}

Let $V$ be a $2m$-dimensional vector space
over $\Ff_q$ provided with a nondegenerate quadratic
form $Q$ with ${-}\det(Q)$ a square in $\Ff_q$,
for instance $Q(x) = x_1x_2 + \ldots + x_{2m-1}x_{2m}$.
We denote the corresponding geometry formed
by the subspaces of $V$ which are totally singular
with respect to $Q$ by $O^+(2m, q)$.
Note that the residue of a point $P$ of $O^+(2m, q)$
is isomorphic to $O^+(2m-2, q)$.
There are two types of totally singular $m$-spaces
with respect to $Q$, arbitrarily labeled \textit{Greeks} and \textit{Latins}.
The meet of two totally singular $m$-spaces of the same type
has even codimension, while the meet of a Greek
and a Latin has odd codimension.

Let $m=5$.
Let $\Gamma$ be the graph with the Greeks of $O^+(10, q)$
as its vertices where two Greeks are adjacent if their meet is a plane,
that is a $3$-space. The graph $\Gamma$ is called $D_{5,5}(q)$.
Recall that a strongly regular graph with parameters $(v, k, \lambda, \mu)$
is a graph (not complete, not edgeless) on $v$ vertices, $k$-regular
such that any pair of adjacent vertices has precisely
$\lambda$ common neighbors, while any pair of distinct
nonadjacent vertices has precisely $\mu$ common neighbors.
It is strongly regular with parameters (see \cite[Th.\ 2.2.20]{BvM})
\begin{align*}
 &v = (q+1)(q^2+1)(q^3+1)(q^4+1), && k = q(q^2+1) \frac{q^5-1}{q-1},\\
 &\lambda = q-1 + q^2(q+1)(q^2+q+1), && \mu = (q^2+1)(q^2+q+1).
\end{align*}

Fix a singular point $P$.
Let $D$ denote the set of Greeks through $P$
and let $C$ denote the set of the remaining Greeks.

\subsection{Changing vertices}

Let $D'$ be the set of lines through $P$.
We define a graph $\Gamma'$ on $C \cup D'$ as follows:
If $x,y \in D'$, then $x,y$ are adjacent if they span
a totally singular plane.
If $x,y \in C$, then $x,y$ are adjacent if they are
adjacent in $\Gamma'$, that is their meet is a plane.
If $x \in D'$ and $y \in C$, then $x,y$ are adjacent
if $x$ meets $y$ in a point.

Let us state the common neighborhood for all pairs of vertices.

\begin{Fact}\label{fact:desc_adj}
  Let $x,y$ be distinct vertices of $\Gamma'$.
  \begin{enumerate}
   \item  If $x,y \in C$ adjacent,
        then $\Gamma'(x) \cap \Gamma'(y)$ consists of
        \begin{enumerate}
          \item all lines $z$ through $P$ such that $z/P$ meets $\< P, P^\perp \cap x\>/P$ and $\< P, P^\perp \cap y\>/P$,
          \item all Greeks not on $P$ which contain the plane $x \cap y$,
          \item all Greeks not on $P$ which meet $x \cap y$ in a line.
        \end{enumerate}
    \item If $y \in C$ and $x \in D'$ with $x,y$ adjacent,
    then $\Gamma'(x) \cap \Gamma'(y)$ consists of
      \begin{enumerate}
        \item all lines through $P$ which meet $P^\perp \cap y$,
        \item all Greeks not on $P$ which meet $\< P, P^\perp \cap y \>$ in a $4$-space,
        \item all Greeks not on $P$ which meet $y$ in a plane through the point $x \cap y$.
      \end{enumerate}
    \item If $x,y \in D'$ adjacent, then $\Gamma'(x) \cap \Gamma'(y)$ consists of
        \begin{enumerate}
          \item all lines $z$ through $P$ such that $\< x, y, z \>$ is totally singular,
          \item all Greeks not on $P$ which meet $\< x, y \>$ in a line.
        \end{enumerate}
   \item  If $x,y \in C$ nonadjacent,
        then $\Gamma'(x) \cap \Gamma'(y)$ consists of
        \begin{enumerate}
          \item all lines $z$ through $P$ such that $z/P$ meets $\< P, P^\perp \cap x\>/P$ and $\< P, P^\perp \cap y\>/P$,
          \item all Greeks not on $P$ which meet $x$ and $y$ each in a plane (necessarily through the point $x \cap y$).
        \end{enumerate}
    \item If $y \in C$ and $x \in D'$ with $x,y$ nonadjacent,
    then $\Gamma'(x) \cap \Gamma'(y)$ consists of
      \begin{enumerate}
        \item all lines through $P$ which meet $x^\perp \cap y$,
        \item all Greeks not on $P$ which meet $x$ in a point and $y$ in a planes of $x^\perp \cap y$.
      \end{enumerate}
    \item If $x,y \in D'$ nonadjacent, then $\Gamma'(x) \cap \Gamma'(y)$ consists of
          all lines $z$ through $P$ such that $\<x,z\>$ and $\<y,z\>$ are totally singular.
  \end{enumerate}
\end{Fact}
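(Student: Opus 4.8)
The entire statement is a verification inside the hyperbolic polar space $D_5(q)$, and I would organise it around two reductions that get used over and over. First, a Greek $G$ is a maximal totally singular subspace of the $10$-dimensional $V$, so $G^{\perp}=G$; consequently two Greeks meet in dimension $1$, $3$ or $5$, a Greek and a Latin in dimension $0$, $2$ or $4$, and $\langle G,H\rangle=(G\cap H)^{\perp}$ for any two Greeks $G,H$. Second, the residue of the fixed singular point $P$: the quotient $\overline V\coloneq P^{\perp}/P$ carries an induced nondegenerate hyperbolic form, so it is an $8$-dimensional $O^+_8(q)$-space, a $D_4$ geometry. Under the quotient map, the lines of $C'$ (which, being totally singular, lie in $P^{\perp}$) correspond bijectively to the singular points of $\overline V$, two of them being adjacent in $\Gamma'$ exactly when the corresponding points are collinear in $\overline V$; the Greeks of $C$ correspond to one of the two families of maximal totally singular $4$-spaces of $\overline V$; and for $y\in D$ the solid $P^{\perp}\cap y$ maps isomorphically onto a totally singular $4$-space $\overline y\subseteq\overline V$ (this is the subspace written $(P^{\perp}\cap y)/P$ in the statement). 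The translation lemma I would isolate and then quote throughout is: for a line $z\in C'$ and a subspace $U\subseteq P^{\perp}$, the line $z$ meets $U$ nontrivially if and only if $z/P$ lies in the image of $U$ in $\overline V$ --- in particular $z$ meets $y\in D$ in a point iff $z/P\in\overline y$ --- and two lines $z,z'\in C'$ span a totally singular plane iff $z/P$ and $z'/P$ are collinear in $\overline V$.

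With these tools I would run through the six cases mechanically, in each one splitting a common neighbour $w\in\Gamma'(x)\cap\Gamma'(y)$ according to whether $w\in C'$ or $w\in D$. If $w\in C'$, both adjacency conditions become conditions on the single point $w/P$ of $\overline V$: that it lie in a prescribed totally singular subspace and/or that it be collinear with a prescribed point (namely $x/P$, in the cases with $x\in C'$). Since any two points lying in one totally singular subspace of $\overline V$ are automatically collinear, one checks that once the hypothesis of the case is used the collinearity requirement is implied by the incidence requirement; this is why the statement records, for $z\in C'$, an incidence with both $P^{\perp}\cap x$ and $P^{\perp}\cap y$ in cases (1) and (4), an incidence with $P^{\perp}\cap y$ alone in case (2), an incidence with the cleverly chosen solid $x^{\perp}\cap y$ in case (5) (which simultaneously encodes ``meets $y$ in a point'' and ``perpendicular to $x/P$''), and a genuine collinearity condition in (3) and (6). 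If instead $w\in D$, the two adjacency conditions become incidence conditions among totally singular subspaces of $V$ --- ``$w$ meets $x$ in a plane'', ``$w$ meets $\langle P,P^{\perp}\cap y\rangle$ in a solid'', and so on --- and the work is to check that together they carve out precisely the families listed.

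The step I expect to be the real obstacle is the sub-case analysis for $w\in D$ in the two cases with $x,y\in D$, namely the splittings (1)(b)--(1)(c) and (4)(b). Writing $\pi=x\cap y$ (a plane), $\alpha=w\cap x$ and $\beta=w\cap y$, one has $\alpha,\beta\subseteq w$ together with $\alpha\cap\beta=\alpha\cap\pi=\beta\cap\pi=w\cap\pi$, so a dimension count inside the $5$-space $w$ only bounds $\dim(w\cap\pi)$; one must pin down exactly which pairs $(\dim\alpha,\dim\beta)$ are simultaneously realisable and rule out the value of $\dim(w\cap\pi)$ that would produce a configuration absent from the statement (for instance, in case (1), that a Greek $w\notin C$ cannot meet both $x$ and $y$ in a plane while meeting $x\cap y$ only in a point). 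This is where the quadratic form has to be used rather than mere linear algebra: from $\langle x,y\rangle=\pi^{\perp}$ one gets that $\langle x,y\rangle/\pi$ is a nondegenerate hyperbolic $4$-space (it contains the disjoint totally singular lines $x/\pi$ and $y/\pi$), and the admissible positions of totally singular subspaces $\alpha\subseteq x$, $\beta\subseteq y$ inside a common Greek are then controlled by the geometry of this $O^+_4$ space together with the parity of intersections of maximal totally singular subspaces. Once cases (1) and (4) are settled, the mixed cases (2) and (5) fall out of the same bookkeeping applied to the solid $x^{\perp}\cap y$ and to the $5$-space $\langle P,P^{\perp}\cap y\rangle$ --- which is a Latin through $P$, since it meets the Greek $y$ in the $4$-space $P^{\perp}\cap y$ --- and cases (3) and (6) are then straightforward; in (6) there is no common neighbour in $D$ at all, because a Greek $w\notin C$ adjacent to both $x$ and $y$ would meet each of them in a point, placing $x/P$ and $y/P$ inside the common totally singular $4$-space $\overline w$ and hence making them collinear, contrary to $x,y$ being nonadjacent in $C'$.
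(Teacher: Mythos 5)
The paper states Fact~\ref{fact:desc_adj} with no proof at all, so there is no argument of the author's to compare yours against; what you have written is a proof strategy for an unproved assertion, and the strategy is sound. You correctly isolate the two engines that make every case mechanical: the residue $P^\perp/P\cong O^+(8,q)$ together with the translation lemma for lines through $P$ (which explains why the collinearity-with-$x$ condition is absorbed into an incidence condition in cases 2 and 5, and why case 6 has no common neighbours in $D$), and the parity of intersections of maximal totally singular subspaces combined with the quotient $\langle x,y\rangle/(x\cap y)\cong O^+(4,q)$ for the delicate sub-cases 1(b)/1(c) and 4(b). For the exclusion you single out in case 1, note that the quadratic form is needed only in the cheapest possible way: if $\dim(w\cap x\cap y)\leq 1$ then $\langle w,\,x\cap y\rangle$ would be totally singular of dimension at least $7$, which is already absurd. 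Two small corrections. First, in case 5 the subspace $x^\perp\cap y$ is a plane, not a solid (since $x\cap y=0$, one has $\dim(x^\perp\cap y)=5-2+\dim(x\cap y)=3$); the solids that actually organise the $w\in D$ count there are $R'^\perp\cap y$ for the contact point $R'=w\cap x$, and the plane $x^\perp\cap y$ only enters as the one forbidden hyperplane of each such solid (it is the unique plane whose associated Greek passes through $P$). Second, your outline verifies that the listed families are contained in, and cover, the common neighbourhood, but if the Fact is to feed the $\lambda$- and $\mu$-counts in the following Proposition you must also settle how the families overlap: as literally stated, items 2(b) and 2(c) are not disjoint (a Greek meeting $\langle P,P^\perp\cap y\rangle$ in a $4$-space containing $x\cap y$ meets $y$ in a plane through $x\cap y$ lying in $P^\perp$), and the counting in the paper implicitly reads 2(c) as restricted to planes not contained in $P^\perp$. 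Making that convention explicit is the only substantive thing missing from your plan.
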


\begin{Lemma}\label{lem:res_p}
  If $x \in C$ and $x' \in D$, then $\< P, P^\perp \cap x\>$ and $x'/P$ are of different type in
  $P^\perp/P \cong O^+(8, q)$. In particular, if $y \in C$, then
  $\< P, P^\perp \cap x\>$ and $\< P, P^\perp \cap y\>$ are the same, meet in a line, or meet trivially.
\end{Lemma}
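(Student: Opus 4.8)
The plan is to rely on the textbook combinatorics of maximal totally singular subspaces of a hyperbolic quadric, which I will abbreviate as $(\ast)$: in $O^+(2m,q)$, two maximal totally singular subspaces $U,W$ belong to the same family (both Greeks or both Latins) if and only if $m-\dim(U\cap W)$ is even, and every totally singular $(m-1)$-space is contained in exactly two maximal totally singular subspaces, necessarily one of each family (see \cite{BvM,BCN}). Everything then reduces to careful parity bookkeeping in $O^+(10,q)$ and in $P^\perp/P\cong O^+(8,q)$.

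First I would unwind the hypothesis on $x$. Since $x$ is a maximal totally singular subspace we have $x^\perp=x$, so $x\subseteq P^\perp$ would force $P\in x^\perp=x$, contradicting $x\in D$; hence $H\coloneq P^\perp\cap x$ is a hyperplane of $x$, that is, a totally singular $4$-space with $P\notin H$. Then $\tilde x\coloneq H+P$ is a $5$-dimensional totally singular subspace through $P$, distinct from $x$: it is totally singular because $H\subseteq P^\perp$ and $P$ is singular, and it is $5$-dimensional because $P\notin H$. By the second part of $(\ast)$, applied to the totally singular $4$-space $H$ in $O^+(10,q)$, the two maximals through $H$ are exactly $x$ and $\tilde x$, and they have opposite type; since $x$ is a Greek, $\tilde x$ is a Latin. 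The point of this step is that $(P^\perp\cap x)/P$ — which, as $P\notin P^\perp\cap x$, is to be read as $((P^\perp\cap x)+P)/P=\tilde x/P$ — is the image in the residue of a Latin through $P$.

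Next I would transfer the two families to the residue $P^\perp/P\cong O^+(8,q)$. A maximal totally singular $5$-space $S$ with $P\in S$ maps to the maximal totally singular $4$-space $S/P$ of $P^\perp/P$, and for two such $S_1,S_2$ one has $\dim(S_1/P\cap S_2/P)=\dim(S_1\cap S_2)-1$. Comparing the parity condition of $(\ast)$ for $O^+(10,q)$ with that for $O^+(8,q)$ shows that $S_1,S_2$ have the same type in $V$ if and only if $S_1/P,S_2/P$ have the same type in $P^\perp/P$; hence $S\mapsto S/P$ sends the Greeks through $P$ into one family of $P^\perp/P$ and the Latins through $P$ into the other. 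Applying this to $x'\in C$ (a Greek through $P$) and to $\tilde x$ (a Latin through $P$, by the previous paragraph) yields that $x'/P$ and $(P^\perp\cap x)/P=\tilde x/P$ are of different type in $O^+(8,q)$, which is the first assertion.

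For the ``in particular'' part: if $y\in D$ as well, then by the second step both $(P^\perp\cap x)/P$ and $(P^\perp\cap y)/P$ are images of Latins through $P$, hence are maximal totally singular $4$-spaces of the \emph{same} type in $O^+(8,q)$; by $(\ast)$ such a pair meets in even codimension, i.e.\ in a subspace of dimension $4$, $2$, or $0$, so they coincide, meet in a line, or meet trivially. I do not expect a genuine obstacle here: the only points requiring care are keeping the two parity computations (in $O^+(10,q)$ and in $O^+(8,q)$) consistent and interpreting the notation $(P^\perp\cap x)/P$ correctly when $P\notin P^\perp\cap x$; the rest is the standard incidence geometry of hyperbolic quadrics.
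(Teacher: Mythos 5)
Your proof is correct, and it reaches the conclusion by a slightly different route than the paper. The paper's proof is a two-line parity argument with a witness: it asserts that all $x'\in C$ yield residues $x'/P$ of one type, and that for $x\in D$ one can find some $x'\in C$ with $(P^\perp\cap x)/P$ and $x'/P$ meeting in a plane (odd codimension in $O^+(8,q)$), which forces the types to differ; the second claim of the lemma then follows because all of $D$ therefore lands in the single opposite family. You instead avoid producing a witness: you identify $\langle P^\perp\cap x, P\rangle$ as the \emph{second} maximal totally singular subspace through the totally singular $4$-space $P^\perp\cap x$, hence a Latin, and then check that the quotient map $S\mapsto S/P$ respects the Greek/Latin bipartition (via the parity computation $5-\dim(S_1\cap S_2)=4-\dim(S_1/P\cap S_2/P)$). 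Both arguments ultimately rest on the same standard fact --- that two generators of a hyperbolic quadric lie in the same family exactly when their intersection has even codimension --- but yours is more self-contained: it makes explicit the interpretation of $(P^\perp\cap x)/P$ as $\langle P^\perp\cap x,P\rangle/P$, replaces the paper's unproved existence claim with a structural identification, and in passing explains \emph{why} the elements of $C$ all have residues of one type, which the paper dismisses as clear. The cost is length; the paper's version is shorter once one grants the two asserted facts.
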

\begin{proof}
  Clearly, all $x' \in D$ have the same type in $x'/P$ and for $x \in C$,
  we find a Greek $x'$ such that $\< P, P^\perp \cap x\>/P$ and $x'/P$ meet in a plane.
  This shows the first part. Hence, all $x \in C$
  are of the same type in the residue of $P$ which shows the second part.
\end{proof}

\begin{Proposition}
  The graph $\Gamma'$ is strongly regular
  with the same parameters as $\Gamma$.
\end{Proposition}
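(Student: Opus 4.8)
The plan is to check the four defining conditions of a strongly regular graph with parameters $(v,k,\lambda,\mu)$ directly: that $\Gamma'$ has $v$ vertices, is $k$-regular, and that any two adjacent (resp.\ non-adjacent) vertices have exactly $\lambda$ (resp.\ $\mu$) common neighbours, the last being read off from Fact~\ref{fact:desc_adj}. (Since $\mu>0$ the resulting graph is automatically connected.) For the vertex count, the number of Greeks is $(q+1)(q^2+1)(q^3+1)(q^4+1)$, and the Greeks through $P$ are in bijection with the generators of one family of $P^\perp/P\cong O^+(8,q)$, of which there are $(q+1)(q^2+1)(q^3+1)$, so $|D|=q^4(q+1)(q^2+1)(q^3+1)$. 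The set $C'$ of totally singular lines through $P$ is in bijection with the singular points of $O^+(8,q)$, again $(q+1)(q^2+1)(q^3+1)$ of them (equivalently $|C'|=|C|$, since $C$ and $C'$ correspond under triality in the residue of $P$), so $|C'\cup D|=(q+1)(q^2+1)(q^3+1)(q^4+1)=v$.

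For regularity I would treat the two kinds of vertex separately. If $x\in D$, then $P\notin x$ and $\bar x:=P^\perp\cap x$ is a $4$-space (it cannot be all of $x$, as $\langle P,x\rangle$ would then be a totally singular $6$-space). The neighbours of $x$ inside $D$ coincide with its neighbours among the Greeks off $P$ in $\Gamma$; moreover the Greeks through $P$ meeting $x$ in a plane are in bijection with the planes of $\bar x$ (a plane $\pi\subseteq\bar x$ corresponds to the unique Greek through $\langle P,\pi\rangle$), while the lines of $C'$ adjacent to $x$ are in bijection with the points of $\bar x$. Since the $4$-space $\bar x$ has as many planes as points, $|\Gamma'(x)|=|\Gamma(x)|=k$. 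If $x\in C'$, its neighbours in $C'$ are the singular points of $O^+(8,q)$ collinear with the one determined by $x$, namely $q(q^2+1)(q^2+q+1)$ of them, and its neighbours in $D$ are, for each of the $q$ singular points $R\ne P$ on $x$, the Greeks through $R$ but not through $P$ (there are $q^3(q+1)(q^2+1)$ of these, all meeting $x$ in exactly $R$, and they are distinct for distinct $R$); adding gives $q(q^2+1)(q^2+q+1)+q^4(q+1)(q^2+1)=q\gauss{5}{3}=k$.

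The remaining task, and the bulk of the work, is to evaluate the six common-neighbourhood formulas of Fact~\ref{fact:desc_adj}. In every case the common neighbourhood is a disjoint union of an explicitly described set of lines of $C'$ and one or two explicitly described sets of Greeks off $P$; each piece is counted by fixing the totally singular subspaces entering its description (the various $P^\perp\cap x$, $x\cap y$, $\langle x,y\rangle$, $\langle P,P^\perp\cap y\rangle$, \dots) and passing to the relevant residue $O^+(2m,q)$, where one uses that a fixed generator is met in codimension $j$ by exactly $\gauss{m}{j}\,q^{\binom{j}{2}}$ generators of its own family (for $j$ even) and by $\gauss{m}{j}\,q^{\binom{j}{2}}$ of the other family (for $j$ odd), together with the standard counts of generators and of singular subspaces through a fixed totally singular subspace. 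For the cases $x,y\in D$ (cases 1 and 4) it is cleanest to compare with $\Gamma$: since $\Gamma'$-adjacency inside $D$ is just $\Gamma$-adjacency, the common neighbours of such a pair lying in $D$ are its common neighbours in $\Gamma$ minus those lying in $C$, so it suffices to show that the number of $C'$-lines adjacent to both $x$ and $y$ equals the number of Greeks through $P$ adjacent to both in $\Gamma$. Lemma~\ref{lem:res_p} is what makes this tractable: it forces $(P^\perp\cap x)/P$ and $(P^\perp\cap y)/P$ to meet in even dimension, so that when $x\cap y$ is a plane (case 1) they meet in a line and when $x\cap y$ is a point (case 4) they meet trivially, reducing each side to a single residue count. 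The mixed cases 2 and 5 and the $C'$-pair cases 3 and 6 are then handled by the same residue bookkeeping applied to the two or three pieces listed in the Fact. Summing, one checks that the three adjacent totals all equal $\lambda=q-1+q^2(q+1)(q^2+q+1)$ and the three non-adjacent totals all equal $\mu=(q^2+1)(q^2+q+1)$, which completes the proof.

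I expect the principal obstacle to be bookkeeping rather than any single difficulty: one must keep track, for every totally singular $5$-space implicit in the Fact --- above all $\langle P,P^\perp\cap y\rangle$ in case 2, which is a Latin --- of whether it is a Greek or a Latin, a distinction governed by Lemma~\ref{lem:res_p} and the triality correspondence between $C$ and $C'$, and one must verify that the pieces listed in Fact~\ref{fact:desc_adj} are genuinely disjoint and that the cases involving $C'$ reproduce exactly the same two constants as the purely-$D$ cases. A running consistency check is that for $x,y\in D$ the chain of bijections ``Greeks through $P$ meeting a given Greek in a plane $\leftrightarrow$ planes of a $4$-space $\leftrightarrow$ points of that $4$-space $\leftrightarrow$ lines of $C'$'' already reconciles the $\Gamma$- and $\Gamma'$-counts for the valency, and an analogous reconciliation is expected to drive the verifications of $\lambda$ and $\mu$.
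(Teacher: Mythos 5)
Your skeleton coincides with the paper's proof: count the vertices, verify $k$-regularity separately on $C'$ and on $D$, then evaluate $|\Gamma'(x)\cap\Gamma'(y)|$ case by case from Fact~\ref{fact:desc_adj}, with Lemma~\ref{lem:res_p} controlling how $(P^\perp\cap x)/P$ and $(P^\perp\cap y)/P$ sit relative to one another. The vertex count and both regularity computations are correct and complete; your bijection, for $x\in D$, between the Greeks through $P$ meeting $x$ in a plane and the planes of the solid $P^\perp\cap x$, set against the points of that solid indexing the $C'$-neighbours, is exactly the reconciliation underlying the paper's arithmetic.

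However, the verification of $\lambda$ and $\mu$ --- the substance of the proposition --- is not carried out: you describe a counting strategy and then assert that ``one checks'' that the six totals come out right. Moreover, the one concrete structural claim you make about this step is false and would derail the computation. You assert that Lemma~\ref{lem:res_p} forces $(P^\perp\cap x)/P$ and $(P^\perp\cap y)/P$ to meet in a line when $x\cap y$ is a plane and trivially when $x\cap y$ is a point, ``reducing each side to a single residue count.'' The lemma only forces the intersection to have even dimension, and both alternatives occur in each case: if the plane $x\cap y$ lies inside $P^\perp$, then the two generators of $P^\perp/P$ contain a common plane and hence coincide, giving $q^3+q^2+q+1$ common neighbours of type 1a rather than $q+1$, compensated by different counts of types 1b and 1c; similarly, for nonadjacent $x,y\in D$ the subcases $P^\perp\cap x\cap y$ empty and $P^\perp\cap x\cap y$ a point must be treated separately and yield different summands with equal totals. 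The paper's proof splits each $D$--$D$ case into exactly these two subcases and exhibits the summands explicitly. Without those subcases, and without the explicit evaluations in the mixed and $C'$--$C'$ cases (where one must also recognise $\langle P, P^\perp\cap y\rangle$ as a Latin and, for case 6, invoke triality to reduce to a $\mu$-count in $\Gamma$ itself), what you have is a plan for the proof rather than the proof.
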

\begin{proof}
  The number of singular points in $O^+(8, q)$
  equals the number of Greeks in $O^+(8, q)$.
  Hence, $\Gamma$ and $\Gamma'$ have the same number of vertices.

  The graph is $k$-regular:
  If $x \in D'$, then we see in the quotient of $P$ that
  $x$ is coplanar with $q(q^2+1)(q^2+q+1)$
  lines in $D'$. A point lies on $(q+1)(q^2+1)(q^3+1)$ Greeks,
  so $x$ meets $q \cdot q^3(q+1)(q^2+1)$ Greeks not through $P$
  in a point.
  If $x \in C$, then $x$ is adjacent to $q \frac{(q^5-1)(q^4-1)}{(q^2-1)(q-1)} - \frac{q^4-1}{q-1}$ elements
  in $C$ and meets $\frac{q^4-1}{q-1}$ lines through $P$.
  Hence, $\Gamma'$ has degree $k$.

  For $x,y$ adjacent, we find $|\Gamma'(x) \cap \Gamma'(y)| = \lambda$
  using Fact \ref{fact:desc_adj}. More precisely:
  If $x,y \in C$ and $P^\perp \cap x \cap y$ is a line,
  then in $\Gamma'(x) \cap \Gamma'(y)$
  we find $q+1$ lines as in Fact \ref{fact:desc_adj}.1a,
  $q-1$ Greeks as in Fact \ref{fact:desc_adj}.1b,
  and $(q^2+q)\cdot q^2(q+1) + (q^2-1)(q+1)$ Greeks
  as in Fact \ref{fact:desc_adj}.1c.
  If $x,y \in C$ and $P^\perp \cap x \cap y$ is a plane,
  then in $\Gamma'(x) \cap \Gamma'(y)$ we find $q^3+q^2+q+1$ lines
  as in Fact \ref{fact:desc_adj}.1a (using Lemma \ref{lem:res_p}),
  $q-2$ Greeks as in Fact \ref{fact:desc_adj}.1b,
  and $(q^2+q+1) \cdot ((q^2+1)(q+1) - 2q-1)$ Greeks as in Fact \ref{fact:desc_adj}.1c.
  If $x \in D'$ and $y \in C$, then in $\Gamma'(x) \cap \Gamma'(y)$
  we find $q^3+q^2+q$ lines as in Fact \ref{fact:desc_adj}.2a,
  $q^4-1$ Greeks which meet $\<P, P^\perp \cap y\>$
  in a $4$-space as in Fact \ref{fact:desc_adj}.1b, and $q^2(q^2+q+1) \cdot q$
  Greeks which meet $y$ in a plane through $x \cap y$ as in Fact \ref{fact:desc_adj}.1c.
  If $x,y \in D'$, then in $\Gamma'(x) \cap \Gamma'(y)$
  we find $q-1 + q^2(q+1)^2$ lines as in Fact \ref{fact:desc_adj}.3a,
  and $q^4(q+1)$ Greeks as in Fact \ref{fact:desc_adj}.3b.

  For $x,y$ nonadjacent, we find $|\Gamma'(x) \cap \Gamma'(y)| = \mu$
  using Fact \ref{fact:desc_adj}. More precisely:
  If $x,y \in C$ and $P^\perp \cap x \cap y$
  is empty, then $\Gamma'(x) \cap \Gamma'(y) = \Gamma(x) \cap \Gamma(y)$
  as in Fact \ref{fact:desc_adj}.4 (using Lemma \ref{lem:res_p}).
  If $x,y \in C$ and $P^\perp \cap x \cap y$ is a point,
  then in $\Gamma'(x) \cap \Gamma'(y)$ we find $q+1$ lines as in Fact \ref{fact:desc_adj}.4a (using Lemma \ref{lem:res_p}),
  and $(q^2+q+1)(q^2+1) - (q+1)$ Greeks as in Fact \ref{fact:desc_adj}.4b.
  If $x \in D'$ and $y \in C$, then
  in $\Gamma'(x) \cap \Gamma'(y)$ we find $q^2+q+1$ lines as in Fact \ref{fact:desc_adj}.5a,
  and $q \cdot (q^3+q^2+q)$ Greeks as in Fact \ref{fact:desc_adj}.5b.
  If $x,y \in D'$, then $\Gamma'(x) \cap \Gamma'(y) \subseteq D'$,
  so by triality (or duality) and Fact \ref{fact:desc_adj}.6, $|\Gamma'(x) \cap \Gamma'(y)| = |\Gamma(x') \cap \Gamma(y')|$
  for $x',y' \in D$ nonadjacent in $\Gamma$.
\end{proof}

The maximal cliques of $\Gamma$
have size $q^3+q^2+q+1$ or $q^4+q^3+q^2+q+1$, see \cite[Proposition 3.2.5.]{BvM}.
These correspond to all Greeks through a fixed line, respectively,
all Greeks which meet a fixed solid in at least a plane.

\begin{Lemma}\label{lem:cliques_D55}
  Let $\cC$ and $\cC'$ be distinct maximal cliques of $\Gamma$.
  Then $|\cC \cap \cC'| \leq q^2+q+1$.
\end{Lemma}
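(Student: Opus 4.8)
The plan is to combine the classification of maximal cliques recalled above with a short case analysis, after one preliminary reformulation. The solid $S$ occurring in the second type of clique is totally singular, and it lies in a unique Greek and a unique Latin $L$: the residue $S^\perp/S$ is a hyperbolic line, hence has exactly two singular points, and the two resulting $5$-spaces are of opposite type because they meet in the codimension-one subspace $S$. Now a Greek and a Latin meet in dimension $0$, $2$, or $4$, and for a Greek $G$ one has $\dim(G\cap S)\ge 3$ if and only if $\dim(G\cap L)=4$: the forward implication is clear since $S\subseteq L$, and conversely if $G\cap L$ is a solid $S'$ then $G\cap S=S\cap S'$ is the intersection of two solids inside the $5$-space $L$, hence has dimension at least $3$. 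So the cliques of the second type are exactly the sets $\cC_L:=\{G\text{ a Greek}:\dim(G\cap L)=4\}$ as $L$ runs over the Latins; write $\cC_\ell:=\{G\text{ a Greek}:\ell\subseteq G\}$ for a clique of the first type. It then suffices to bound $|\cC\cap\cC'|$ in the three cases (i) $\cC=\cC_\ell$, $\cC'=\cC_{\ell'}$; (ii) $\cC=\cC_L$, $\cC'=\cC_{L'}$; (iii) $\cC=\cC_\ell$, $\cC'=\cC_L$; in (i) and (ii) we may assume $\ell\ne\ell'$, respectively $L\ne L'$, since otherwise $\cC=\cC'$.

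Cases (i) and (ii), and case (iii) when $\ell\not\subseteq L$, will be handled by dimension counts. In case (i), $\cC_\ell\cap\cC_{\ell'}$ is the set of Greeks containing $\langle\ell,\ell'\rangle$; this span has dimension $3$ or $4$, so it lies in no Greek unless it is totally singular, in which case it lies in $q+1$ Greeks (when it is a plane, whose residue is an $O^+(4,q)$) or in exactly one (when it is a solid). In case (ii), put $W=L\cap L'$; two distinct Latins meet in odd dimension, so $\dim W\in\{1,3\}$. A common Greek $G$ contains the span of the solids $G\cap L$ and $G\cap L'$, which meet in $G\cap W$, so $\dim(G\cap W)\ge 4+4-5=3$; this is impossible if $\dim W=1$, and if $\dim W=3$ it forces $W\subseteq G$, so every common Greek contains the plane $W$ and there are at most $q+1$ of them. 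In case (iii) with $\ell\not\subseteq L$, the space $\ell\cap L$ is a point or $0$; if $0$, a common Greek would contain a totally singular space of dimension $2+4=6$, which is impossible; if a point, then any common Greek $G$ equals $\langle\ell,\,G\cap L\rangle$, where $G\cap L\subseteq\ell^\perp\cap L$, a subspace of dimension $3$ or $4$ (it is not $L$ because $\ell\not\subseteq L=L^\perp$), so there is at most one possibility for the solid $G\cap L$, hence for $G$, and $|\cC\cap\cC'|\le 1$.

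The main case is (iii) with $\ell\subseteq L$, which is the only one in which the bound is, and must be, attained. Here I would pass to the residue $\ell^\perp/\ell\cong O^+(6,q)$, in which all totally singular $3$-spaces are maximal and form two families. The map $G\mapsto G/\ell$ is a bijection from $\cC_\ell$ onto one of these two families, and it sends $L$ to a totally singular $3$-space $\bar L$ of the other family; since $G\cap L\supseteq\ell$, one has $\dim(G\cap L)=\dim(\bar G\cap\bar L)+2$ with $\bar G:=G/\ell$, so $G\in\cC_L$ exactly when $\bar G\cap\bar L\ne 0$ (totally singular $3$-spaces from different families of $O^+(6,q)$ meet in dimension $0$ or $2$). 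Through each of the $q^2+q+1$ totally singular lines contained in $\bar L$ there passes exactly one totally singular $3$-space of the first family; these $q^2+q+1$ spaces are pairwise distinct, since a $3$-space of the first family containing two distinct lines of $\bar L$ would contain $\bar L$ and hence equal it, contradicting that $\bar L$ is of the other family; and every first-family $3$-space meeting $\bar L$ nontrivially arises this way. Hence $|\cC_\ell\cap\cC_L|=q^2+q+1$, and combining all cases gives $|\cC\cap\cC'|\le q^2+q+1$, with equality possible.

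I expect the main obstacle to be the bookkeeping in the preliminary reformulation, not any single computation. Describing the second type of clique directly through the solid $S$ is awkward (it is then not transparent that many solids give the same clique, nor which Greeks belong to it), while passing to the Latin $L$ is precisely what makes the three intersection estimates short. The only step that is not purely formal is the residue computation in case (iii): one has to keep the two families of totally singular $3$-spaces of $O^+(6,q)$ apart and use that the residue of a totally singular line in $O^+(10,q)$ is an $O^+(6,q)$, and this is where the value $q^2+q+1$ comes from.
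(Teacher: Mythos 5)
Your proof is correct and follows essentially the same route as the paper: a case analysis over the two types of maximal cliques of $\Gamma$ (all Greeks through a line, all Greeks meeting a fixed totally singular solid in at least a plane), bounding each of the three pairings by dimension counts, with the extremal value $q^2+q+1$ arising in the line--solid case. Your reformulation of the second type of clique via the unique Latin containing the solid, and the explicit $O^+(6,q)$ residue computation, merely supply details that the paper's terser proof asserts without justification.
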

\begin{proof}
  If $\cC$, respectively, $\cC'$ is the set of all Greeks through
  a line $L$, respectively, $L'$, then $\cC \cap \cC'$
  is the set of all Greeks through $\< L, L'\>$ and
  $\cC \cap \cC'$ is largest if $\< L, L'\>$ is a totally singular plane.
  Then $\cC \cap \cC'$ consists of the $q+1$ Greeks through $\< L, L'\>$.
  If $\cC'$ instead is the set of all Greeks which meet a fixed solid $S$
  in a plane of solid, then $\cC \cap \cC'$ is largest if
  $L \subseteq S$. Then $|\cC \cap \cC'| = q^2+q+1$.
  If $\cC$, respectively, $\cC'$ is the set of all Greeks which
  meet a solid $S$, respectively, $S'$ in at least a plane, then $\cC \cap \cC'$
  is largest if $S \cap S'$ is a plane. Then $|\cC \cap \cC'| = q+1$.
\end{proof}

Next we apply Lemma \ref{lem:cliques_D55} to classify all maximal cliques in $\Gamma'$.

\begin{Lemma}\label{lem:max_D5_cliques}
  Let $\cC$ be a clique of $\Gamma'$.
  Call $(a,b)$ the type of $\cC$ where
  $a = |\cC \cap D'|$ and $b = |\cC \cap C|$.
  If $\cC$ is maximal, then one of the following occurs:
  \begin{enumerate}
   \item $|\cC| = q^3+q^2+q+2$ of type $(1, q^3+q^2+q+1)$,
   where $\cC \cap C$ consists of the $(q+1)(q^2+1)$ Greeks
   through a line $L$ not in $P^\perp$ and
   $\cC \cap D' = \{ \< P, P^\perp \cap L\>\}$.
   \item $|\cC| = q^3+q^2+q+1$ of type $(q+1,q^3+q^2)$,
   where $\cC \cap C$ consists of the $q^2(q+1)$ Greeks
   through a line $L$ in $P^\perp$ and
   $\cC \cap D'$ consists of the $q+1$ lines through $P$ which meet $L$.
   \item $|\cC| = q^3+q^2+q+1$ of type $(q^3+q^2+q+1,0)$,
   where $\cC$ consists of all lines through $P$ in a fixed Greek.
   \item $|\cC| = q^4+q^3+q^2+q$ of type $(0,q^4+q^3+q^2+q)$,
   where $\cC$ consists of all Greeks not through $P$ which meet
   a solid $S$ not in $P^\perp$ in a plane.
   \item $|\cC| = q^4+q^3+q^2+q$ of type $(0,q^4+q^3+q^2+q)$,
   where $\cC$ consists of all Greeks which
   meet a fixed solid $S$, where $\<P,S\>$ is a Greek, in a plane.
   \item $|\cC| = q^4+q^3+q^2+q+1$ of type $(q^3+q^2+q+1,q^4)$,
   where $\cC \cap C$ consists of all Greeks disjoint from $P$
   which meet a fixed Latin $L$ through $P$ in a hyperplane,
   and $\cC \cap D'$ consists of the lines through $P$ in $L$.
   \item $|\cC| = q^4+q^2+q+1$ of type $(q^2+q+1,q^4)$,
   where $\cC \cap C$ consists of the $q^4$ Greeks disjoint from $P$
   which meet a fixed solid $S$ through $P$ in a plane,
   and $\cC \cap D'$ consists of the $q^2+q+1$ lines through $P$ in $S$.
  \end{enumerate}
\end{Lemma}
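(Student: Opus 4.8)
The plan is to classify the maximal cliques of $\Gamma'$ by combining the known maximal cliques of $\Gamma$ (the line-cliques and solid-cliques recalled above, together with Lemma~\ref{lem:cliques_D55}) with the structure of the residue $P^\perp/P\cong O^+(8,q)$, organising the argument by the type $(a,b)$ of $\cC$. Two remarks set this up. First, the elements of $C'$ are the points of $O^+(8,q)$ and two of them are adjacent in $\Gamma'$ exactly when they are collinear there; hence $\cC\cap C'$ is a clique of the collinearity graph of $O^+(8,q)$, so it consists of \emph{all} lines through $P$ lying in some totally singular subspace $U$ with $P\subseteq U\subseteq P^\perp$, which we take to be the span of $\cC\cap C'$ (so $2\le\dim U\le 5$ when $a\ge 1$). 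Second, for $x\in D$ we have $P\notin x$, so $x\cap P^\perp$ is a solid not through $P$, and the fact we use repeatedly is that a line $\ell$ through $P$ is adjacent in $\Gamma'$ to $x\in D$ if and only if the point $\ell/P$ lies on the totally singular $4$-space $\<P,x\cap P^\perp\>/P$ of $O^+(8,q)$, which by Lemma~\ref{lem:res_p} is always of the type opposite to the one carried by the Greeks through $P$. In particular, $x\in D$ is adjacent to all lines through $P$ in $U$ iff $\<P,x\cap P^\perp\>/P$ contains $U/P$.

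The first step is the case $a=0$. Since $C'$ imposes no constraint, if $\mathcal M$ is a maximal clique of $\Gamma$ with $\cC\subseteq\mathcal M$ then $\cC=\mathcal M\cap D$; and if $\cC$ lies in two distinct such $\mathcal M$ then Lemma~\ref{lem:cliques_D55} gives $|\cC|\le q^2+q+1$, which is discarded by exhibiting an extension. One then examines the position of the defining line $L$, respectively solid $S$, of $\mathcal M$ relative to $P$ and $P^\perp$. A line-clique always acquires a neighbour in $C'$ (a line through $P$ meeting $L$), so it is not maximal; a solid-clique $\mathcal M\cap D$ is non-extendable into $C'$ exactly when $S\not\subseteq P^\perp$, which is (iv), and when $S\subseteq P^\perp$ with $P\notin S$ and $\<P,S\>$ a Greek, which is (v); in the remaining positions $\mathcal M\cap D$ again acquires a $C'$-neighbour, and those configurations recur below with $a\ge1$. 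The cardinalities follow by deleting from $|\mathcal M|$ the Greeks through $P$ that it contains.

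The second step is $a\ge 1$, where $\cC\cap C'$ is the full set of lines through $P$ in $U$ and $\cC\cap D$ is a maximal clique of $\Gamma$ among the $x\in D$ with $\<P,x\cap P^\perp\>/P\supseteq U/P$; Lemma~\ref{lem:cliques_D55} forces any sufficiently large such clique inside a unique line- or solid-clique of $\Gamma$. Splitting on $\dim U$, and on the type of $U$ when $\dim U=5$, gives the remaining cases: $\dim U=2$ forces $\cC\cap D$ to be all $(q+1)(q^2+1)$ Greeks through a line $L\not\subseteq P^\perp$ with $\<P,L\cap P^\perp\>=U$, which is (i); $\dim U=3$ forces the $q^2(q+1)$ Greeks through a line $L\subseteq U$ not on $P$, which is (ii); $\dim U=4$, after matching the cross-adjacency constraints, gives (vii); $\dim U=5$ with $U$ a Greek through $P$ forces $\cC\cap D=\emptyset$ (such an $x$ would force $\<P,x\cap P^\perp\>/P=U/P$, impossible by Lemma~\ref{lem:res_p}), which is (iii); and $\dim U=5$ with $U$ a Latin through $P$ gives the $q^4$ Greeks meeting $U$ in a solid, which is (vi). Each case is then closed by a direct count and by checking that neither part of $\cC$ can be enlarged.

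The main difficulty should be the $a\ge1$ analysis for $\dim U\in\{2,3\}$: one must show $\cC\cap D$ is genuinely the whole pencil of Greeks through one fixed line, ruling out ``mixed'' cliques of $D$ whose Greeks share no common line, and ruling out the small cliques with $|\cC\cap D|\le q^2+q+1$. This is where Lemma~\ref{lem:cliques_D55} does the work, confining a large $\Gamma$-clique to a single line- or solid-clique, but the bookkeeping needed to match the position of that clique against the condition $\<P,x\cap P^\perp\>/P\supseteq U/P$ for all $x$ is delicate. The other delicate point is the non-extendability of (iv) and (v): it reduces to showing that the relevant family of totally singular $4$-spaces $\<P,x\cap P^\perp\>/P$ of $O^+(8,q)$ has empty common intersection --- equivalently, that no line through $P$ is adjacent to all Greeks of the clique --- which is a short incidence count in $O^+(8,q)$.
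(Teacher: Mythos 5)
Your proposal follows essentially the same route as the paper: confine $\cC\cap D$ to a unique maximal clique of $\Gamma$ via Lemma~\ref{lem:cliques_D55}, identify $\cC\cap C'$ with a clique of the residue $O^+(8,q)$, and then case-split on the position of the defining line or solid relative to $P$ and $P^\perp$ (the paper organises the split by $|\cC\cap D|$ versus $q^2+q+1$ rather than by $a$ and $\dim U$, but the content is the same). The adjacency criterion you isolate, that $\ell\in C'$ is adjacent to $x\in D$ iff $\ell/P\in\<P,x\cap P^\perp\>/P$, together with Lemma~\ref{lem:res_p}, is exactly what the paper uses implicitly, so the argument goes through as you outline it.
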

\begin{proof}
  We have that $\cC \cap C$ lies in one of the two types
  of maximal cliques of $\Gamma$, either all Greeks through a fixed line
  or all Greeks which meet a fixed solid in at least a plane.
  Furthermore, $\Gamma'$ restricted to $D'$ is isomorphic to $O^+(8, q)$.
  The maximal cliques of $O^+(8, q)$ all have size $q^3+q^2+q+1$
  and correspond to the totally singular $4$-spaces of $O^+(8, q)$.
  Hence, $\cC \cap D'$ is contained in a totally singular $5$-space.

  Suppose that $|\cC \cap C| > q^2+q+1$.
  By Lemma \ref{lem:cliques_D55}, the maximal clique, restricted to $C$,
  lies in a unique maximal clique of $\Gamma$.
  If $\cC \cap C$ is contained in a maximal clique of $\Gamma$
  which consists of all Greeks which contain a fixed line $L$,
  then we find, depending on $P^\perp \cap L$ a point
  or $P^\perp \cap L = L$, cases (1) and (2).
  If $\cC \cap C$ is contained in a maximal clique of $\Gamma$
  which consists of all Greeks which meet a fixed solid $S$ in a plane
  or a solid. There are five cases: $P^\perp \cap S$ is a plane,
  $\< P, S \>$ is a Greek, $\< P, S \>$ is a Latin, or $P \subseteq S$.
  This yields (4)--(7).

  Now suppose that $|\cC \cap C| \leq q^2+q+1$.
  We saw in the above that this can only occur if $\cC \cap D'$ spans
  a Greek which corresponds to case (3).
\end{proof}

Thus, $\Gamma$ and $\Gamma'$ are not isomorphic
which shows Theorem \ref{thm:main}.
By \cite[Th.\ 9.4.8]{BCN}, $\Aut(\Gamma) \cong P\Gamma{}\Omega{}^+_{10}(q)$,
an index two subgroup of $P\Gamma{}O^+_{10}(q)$.
It is clear that $\Aut(\Gamma')$ contains the stabilizer
of $P$ in $P\Gamma{}\Omega{}^+_{10}(q)$.
An element of $\Aut(\Gamma')$ maps a maximal clique
to a maximal clique of the same size. By Lemma \ref{lem:max_D5_cliques},
maximal cliques of size $q^4+q^3+q^2+q$ contain no vertices of $D'$.
Hence, $D'$ and $C$ are each orbits of $\Aut(\Gamma')$.
As Lemma \ref{lem:max_D5_cliques} provides the totally singular lines
not on $P$, one can indeed reconstruct the whole geometry, we see
that $\Aut(\Gamma')$ is a subgroup of $\Aut(\Gamma)$.

%

A graph satisfies the {\it $t$-vertex condition}
if, for all triples $(T, x_0, y_0)$ consisting
of a $t$-vertex graph $T$ together with two distinct
distinguished vertices $x,y$ of $\Gamma$, the number of
copies of $T$ in $\Gamma$, where the isomorphism maps $x_0$
to $x$ and $y_0$ to $y$, does not depend on the choice
of the pair $x, y$ but only on whether $x, y$
are adjacent or nonadjacent.

Strongly regular graphs are precisely the graphs
satisfying the $3$-vertex condition.
Rank $3$ graphs are precisely the
graphs satisfying the $v$-vertex condition.
Excluding rank $3$ graphs, not many graphs
satisfying the $4$-vertex condition are known, see \cite{BIK2023} for a recent survey.
Sims observed that a strongly regular graph
satisfies the $4$-vertex condition with parameters $(\alpha, \beta)$
if and only if the number of edges in $\Gamma(x) \cap \Gamma(y)$ is $\alpha$ (resp.\ $\beta$)
whenever the vertices $x, y$ are adjacent (resp.\ nonadjacent).
Testing this (either by hand or computer) we find

\begin{Proposition}
  For $q=2$, the graph $\Gamma'$ satisfies the $4$-vertex condition with
  parameters $(1554, 315)$. \hfill \qed
\end{Proposition}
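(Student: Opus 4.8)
The plan is to reduce to the criterion of Sims quoted above and then to a short case analysis. By the preceding Propositions, $\Gamma'$ is strongly regular with parameters $(v,k,\lambda,\mu)=(2295,310,85,35)$ when $q=2$, and $\Aut(\Gamma')$ is the stabiliser $G_P$ of $P$ in $P\Gamma{}\Omega{}^+_{10}(q)$. By Sims's criterion the $4$-vertex condition with parameters $(1554,315)$ is then equivalent to the assertion that the number of edges inside $\Gamma'(x)\cap\Gamma'(y)$ equals $1554$ for every edge $\{x,y\}$ and $315$ for every non-edge $\{x,y\}$. First I would note that $G_P$ is transitive on $C'$ and on $D$, and --- by Witt's extension theorem applied inside $O^+(10,q)$ and inside the residue $P^\perp/P\cong O^+(8,q)$ --- transitive on each of the configurations occurring in Fact \ref{fact:desc_adj}; concretely, the adjacent pairs fall into the four types $x,y\in D$ with $P^\perp\cap x\cap y$ a line, $x,y\in D$ with $P^\perp\cap x\cap y$ a plane, $x\in C'$ and $y\in D$, and $x,y\in C'$, and the non-adjacent pairs into four analogous types. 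Hence it suffices to compute the edge count once for one representative of each of these orbits.

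For a fixed representative $(x,y)$ I would write $N=\Gamma'(x)\cap\Gamma'(y)$, $N_{C'}=N\cap C'$, $N_D=N\cap D$, and split the quantity to be computed as $e(N)=e(N_{C'})+e(N_{C'},N_D)+e(N_D)$, the numbers of edges inside $N_{C'}$, between $N_{C'}$ and $N_D$, and inside $N_D$. Fact \ref{fact:desc_adj} describes all three ingredients explicitly: $N_{C'}$ is a prescribed family of totally singular lines through $P$, that is, a prescribed set of singular points of the hyperbolic geometry $P^\perp/P$, so $e(N_{C'})$ counts collinear pairs among them; $N_D$ is a prescribed family of Greeks off $P$, so $e(N_D)$ counts the pairs among them meeting in a plane, an incidence count in $O^+(10,q)$; and $e(N_{C'},N_D)$ counts the incident pairs consisting of a totally singular line through $P$ and a Greek off $P$ meeting it in a point. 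Each of these I would evaluate by the bookkeeping already used in the proofs that $\Gamma'$ is $k$-regular and strongly regular --- count totally singular subspaces through a prescribed flag, then count Greeks on them --- and then substitute $q=2$, so that every quantity becomes an explicit integer. Summing the three contributions in each of the (finitely many) cases and checking that every adjacent total equals $1554$ and every non-adjacent total equals $315$ completes the argument.

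The step I expect to be the main obstacle is the bookkeeping for the cross term $e(N_{C'},N_D)$ and for the mixed pair $x\in C'$, $y\in D$: there one must keep track of the type (Greek versus Latin) of the several $O^+(8,q)$-residues of $P$ and of the position of the solids $\langle P,P^\perp\cap y\rangle$, $\langle x,y\rangle$, and the like relative to $P^\perp$, with Lemma \ref{lem:res_p} the key input. This is laborious but introduces no phenomenon that is not already present in the proof that $\Gamma'$ is strongly regular. For $q=2$ the most economical verification is simply to build $\Gamma'$ as an explicit graph on $2295$ vertices and to have a computer evaluate $e(\Gamma'(x)\cap\Gamma'(y))$ on one pair from each $G_P$-orbit of ordered pairs, obtaining $1554$ on edges and $315$ on non-edges.
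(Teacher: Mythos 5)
Your proposal is correct and matches the paper's approach: the paper likewise reduces to Sims's criterion (constancy of the number of edges in $\Gamma'(x)\cap\Gamma'(y)$ over adjacent and over nonadjacent pairs) and then simply records that the verification is done ``by hand or computer,'' which is exactly the case-by-case count (organized via Fact \ref{fact:desc_adj} and the orbit decomposition) or the explicit machine check that you describe. The only caveat is that your orbit enumeration under the stabiliser of $P$ would need to be justified before restricting the check to representatives, but this does not affect the validity of the overall argument.
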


For $q \geq 3$, no $\Gamma'$ satisfies the $4$-vertex condition.

\subsection{Changing edges}

Alternatively, we can obtain a graph isomorphic
to $\Gamma'$ by considering the following
equitable partition: We have that the residue of $P$
possesses a polarity $\sigma$ which interchanges singular points and Latins.
Let $C_S$ denote the set of Greeks not on $P$
which meet a Latin $S$ on $P$ in a 4-space.
Then $\{ C_S: S \text{ is a Latin on $P$} \}$ is an equitable partition of the induced
subgraph on $X \setminus D$.
Moreover, $\{ C_S \cup C_{S^\sigma}: S \text{ is a Latin on $P$} \}$ is still an equitable partition of $X \setminus D$
and satisfies the conditions of GM switching
as each element of $D$ either contains 0, 1, or 2 elements of $\{ S, S^\sigma \}$.
Hence, we obtain a graph $\Gamma'$ cospectral with $\Gamma$.
As $\Gamma$ is strongly regular, $\Gamma'$ is strongly regular with
the same parameters.

\bigskip
\paragraph*{Acknowledgments}
The author thanks Andries E.\ Brouwer and Hendrik Van Maldeghem for various remarks.
The author is supported by a
postdoctoral fellowship of the Research Foundation -- Flanders (FWO).





\end{document}